\setlist[itemize]{leftmargin=*}
\setlist[enumerate]{leftmargin=*}
\setlist[description]{leftmargin=*}
\title[Hom-associative magmas and Hom-associative Magma Algebras]{Hom-associative magmas with applications to Hom-associative
magma algebras}
\newtheorem{thm}{Theorem}
\newtheorem{prop}[thm]{Proposition}
\theoremstyle{definition}
\newtheorem{defi}[thm]{Definition}
\newtheorem{exa}[thm]{Example}
\newcommand{\I}{{\rm id}}
\newcommand{\Pfun}{{\rm Pfun}}
\newcommand{\Fun}{{\rm Fun}}
\newcommand{\M}{{\rm M}}
\newcommand{\PM}{{\rm PM}}
\newcommand{\WPM}{{\rm WPM}}
\begin{document}

\author{Patrik Lundström}
\address{University West,
Department of Engineering Science, 
SE-46186 Trollh\"{a}ttan, Sweden}

\email{patrik.lundstrom@hv.se}

\subjclass[2010]{08A05, 08A35, 17A01, 17D99, 20N02}

\keywords{nonassociative algebras, magmas, Hom-associative}

\begin{abstract}
Let $X$ be a magma, that is a set equipped with a 
binary operation, and consider a function
$\alpha : X \to X$. We say that $X$ is 
Hom-associative if, for all $x,y,z \in X$, the equality 
$\alpha(x)(yz) = (xy) \alpha(z)$ 
holds. For every isomorphism class of magmas of order two, we 
determine all functions $\alpha$ making $X$ Hom-associative.
Furthermore, we find all such $\alpha$ that are 
endomorphisms of $X$. We also consider versions
of these results where the binary operation on $X$ as well as the 
function $\alpha$ are partially defined. We use our findings 
to construct many examples of two-dimensional 
Hom-associative and multiplicative magma algebras. 
\end{abstract}

\maketitle

\section{Introduction}

In the last decades there has risen an intense interest 
in various \emph{Hom} versions of algebraical objects. 
The defining axioms of these objects  
are miscellaneous endomorphism deformations of its standard axioms.
The first example of this seems to be
\cite{hartwig2006} where Hartwig, Larsson and Silvestrov define
\emph{Hom–Lie algebras}. For such objects, the usual Jacobi identity 
is replaced by the so called Hom–Jacobi identity:
\[
[\alpha(x),[y,z]]+ [\alpha(y),[z,x]] + [\alpha(z),[x,y]] = 0
\]
where $\alpha$ is an endomorphism of the Lie algebra.
Another instance of this is 
\cite{makhlouf2008} where Makhlouf and Silvestrov introduce
\emph{Hom-algebras}, where the usual 
associativity is replaced by so called Hom-associativity: 
\begin{equation}\label{eq:hom-associativity}
\alpha(x)(yz) = (xy) \alpha(z)
\end{equation}
where $\alpha$ now is an algebra 
endomorphism. Similarly, Hom-coalgebras, Hom-bialgebras and 
Hom–Hopf algebras have been proposed, see 
\cite{makhlouf2009,makhlouf2010,yau2010}.
In \cite{laurent2018} Laurent-Gengoux, Makhlouf and 
Teles define a \emph{Hom-group} as a nonempty set
equipped with a binary operation satisfying 
(\ref{eq:hom-associativity}), multiplicativity of $\alpha$:
\begin{equation}\label{eq:multiplicative}
\alpha(xy) = \alpha(x) \alpha(y)
\end{equation}
and having a distinguished member 1 satisfying
the unital identity: 
\begin{equation}
1x = x1 = \alpha(x)
\end{equation}
as well as some 
Hom versions of invertibility axioms for $X$ 
(see \cite[Def. 0.1]{laurent2018}).

An impetus for studying Hom versions of classical 
mathematical objects is that 
it potentially could give us a language to describe families of 
involved mathematical structures
using well studied less complicated structures, 
looking at them through a Hom lens.
There are many such instances. 
Indeed, in \cite{goze2017} Goze and Remm show that \emph{all}
three-dimensional algebras are Hom-associative Lie algebras.
Another example is \cite[Ex. 2.13-14]{hassanzadeh2020} 
where Hassanzadeh describes several non-associative structures
as Hom-groups. For other relevant results on various types of
Hom-associative structures, see \cite{basdouri2021} and
\cite{jiang2020} and the references therein.

In this article, we apply this philosophy to the 
context of \emph{magmas}, that is 
sets equipped with a binary operation (see \cite[p.~1]{bourbaki1989}). 
Classically, these objects have 
been categorized into many different types of families, such as 
groups, semigroups, Brandt groupoids, quasigroups, 
multigroups, hypergroups, loops etc. (see e.g. \cite{bruck1958}). 
We wish to add a Hom perspective to this classification.
More specifically, for a given magma $X$, we would like  
to answer the following questions:
\begin{itemize}[itemsep=3mm,topsep=3mm]
\item \emph{For what functions  $\alpha: X \to X$
is $X$ Hom-associative in the sense of (\ref{eq:hom-associativity})?}

\item \emph{Which of these functions are magma endomorphisms
in the sense of (\ref{eq:multiplicative})?}
\end{itemize}

To fully answer both of these questions for all magmas
is probably a difficult task. 
So a first step would be to consider some special classes of magmas.
In this article, we completely answer these questions
for magmas of order two
(see Theorem \ref{thm:main}).
Note that finding such Hom structures on magmas is important not only
from the magma perspective, but also from the point of 
view of algebras over a field $K$. Namely, given a
function $\alpha : X \to X$, then it induces 
a natural Hom-algebra structure on the magma algebra $K[X]$ of 
$X$ over $K$, 
and Hom properties of $\alpha$ reflects upon 
algebra properties of $K[X]$ (see Theorem \ref{thm:secondmain}).

Here is a detailed outline of the article.

In Section \ref{sec:homassociativemagmas}, we first state our
conventions on sets, relations, (partial) functions and
(partial) equality of (partial) functions. Then we define various 
concepts of (partial) magmas such as weak/partial homomorphisms,
and (partially) Hom-associative magmas. Thereafter, we
consider magmas of order two. We first find all non-isomorphic
multiplication tables of partial magmas of order two.
Then we give a complete characterisation of all (weak)
partial endomorphisms of these structures as well as 
all (partial) Hom-associative structures defined on them
(see Theorem \ref{thm:main}). 

In Section \ref{sec:magmaalgebras}, we
first recall some classical definitions of multiplicative and 
Hom-associative Hom-algebras (see Definition \ref{def:homproperties}).
After that, we introduce 
partial versions of these concepts 
(see Definition \ref{def:partilconcepts}).
Then we show how various Hom properties of a magma $X$ 
reflect upon properties of the corresponding magma algebra $K[X]$
(see Theorem \ref{thm:secondmain}). At the end of this section,
we exemplify our main results for some instances of 
two-dimensional magma algebras
(see Example \ref{ex:someinstances}).

\section{Hom-associative magmas}\label{sec:homassociativemagmas}

\subsection{Relations and functions}

Let $X$ and $Y$ be sets. Suppose that 
$f$ is a relation from $X$ to $Y$.
By this we mean that $f$ is a subset of $X \times Y$ 
and we denote this by $f : X \to Y$.
The {\it inverse relation of $f$}, denoted by $f^{-1} : Y \to X$, 
is the set $\{ (y,x) \in Y \times X \mid (x,y) \in f \}$.
Given $x \in X$ we put 
$f(x) := \{ y \in Y \mid (x,y) \in f \}$
and we say that $f(x)$ is \emph{defined} when
$f(x) \neq \emptyset$.
The \emph{range} and \emph{domain} of $f$ are defined
to be the sets $R_f := \cup_{x \in X} f(x)$ and
$D_f := \cup_{y \in Y} f^{-1}(y)$ respectively.
We say that $f$ is a \emph{partial function} if for all
$x \in X$ the set $f(x)$ has at most one element.
In that case, if $f(x)$ is defined and $f(x) = \{ y \}$,
then we will often, as customary, write $f(x) = y$.
If $f$ is a partial function with $D_f = X$, then 
$f$ is called a \emph{function}.
If $g : Y \to Z$ is another relation, then the 
{\it composition of $g$ and $f$}, denoted by $g \circ f : X \to Z$,
is the set $\{ (x,z) \in X \times Z \mid \exists y \in Y \ 
(x,y) \in f \ {\rm and} \ (y,z) \in g \}$.
If $h : X' \to Y'$ is yet another relation, then 
$f \times h : X \times X' \to Y \times Y'$
is the relation $\{ ( (x,x') , (y,y') ) \mid (x,y) \in f \ 
{\rm and} \ (x',y') \in h \}$.
The identity relation $\I_X : X \to X$ is the set 
$\{ (x,x) \mid x \in X \}$;
often we will skip the subscript and just write $\I$. 
We let $\Fun(X,Y)$ ($\Pfun(X,Y)$) denote the set of 
(partial) functions from $X$ to $Y$.
Suppose that $f,g \in \Pfun(X,Y)$. We say that $f$ and $g$ are 
\emph{partially equal}, denoted by $f \approx g$, 
if for all $x \in X$ such that $f(x)$
and $g(x)$ are defined, then $f(x) = g(x)$. 
Note that the relation $\approx$ is
reflexive and symmetric but not necessarily transitive.
Clearly, the restriction of $\approx$ to $\Fun(X,Y)$ 
coincides with the ordinary equality of functions.

\subsection{Magmas}

Let $(X,\nabla)$ be a \emph{partial magma}. 
By this we mean that $X$ is a set and
$\nabla \in \Pfun(X \times X , X)$.
Note that if $\nabla \in \Fun(X \times X,X)$,
then $(X,\nabla)$ is a magma.
Let $(X',\nabla')$ be another partial magma and 
suppose that $\alpha \in \Pfun(X,X')$. 

\begin{defi}
With the above notations, we say that $\alpha$ is a:
\begin{itemize}[itemsep=1mm,topsep=1mm]

\item \emph{weak partial homomorphism of partial magmas} if
$\alpha \circ \nabla \approx \nabla' \circ (\alpha \times \alpha)$
as partial functions.
In that case, if $\alpha \in \Fun(X,X')$, then $\alpha$ is
called a \emph{weak homomorphism of partial magmas};

\item \emph{partial homomorphism of partial magmas} if
$\alpha \circ \nabla = \nabla' \circ (\alpha \times \alpha)$
as partial functions.
In that case, if $\alpha \in \Fun(X,X')$, then $\alpha$ is
called a \emph{homomorphism of partial magmas};

\item \emph{homomorphism of magmas} if 
$\alpha$ is a homomorphism of partial magmas and
$(X,\nabla)$ and $(X',\nabla')$ are indeed magmas.

\end{itemize}
\end{defi}

We let $\M$ ($\PM$) denote the category having (partial) magmas 
as objects and (partial) homomorphisms of (partial) magmas as morphisms. 
We let 
$\WPM$ denote the category having partial magmas
as objects and weak 
partial homomorphisms of partial magmas
as morphisms. Clearly, $\M$ is a subcategory of $\PM$
which, in turn, is a subcategory of $\WPM$.
The next result will not be used in the sequel in full 
generality. Nevertheless, we record it for its own interest.

\begin{prop}\label{prop:Miso}
Let $(X,\nabla)$ and $(X',\nabla')$ be partial magmas and
suppose that $\alpha : (X,\nabla) \to (X',\nabla')$ 
is a morphism in $\WPM$.
\begin{itemize}[widest=(a),topsep=1mm,itemsep=1mm]

\item[{\rm (a)}] The map $\alpha$ is an isomorphism in $\M$ if and
only if $\alpha$ is bijective and for all $x,y \in X$
the equality
$\alpha(\nabla(x,y)) = \nabla'(\alpha(x),\alpha(y))$
holds.

\item[{\rm (b)}] The map $\alpha$ is an isomorphism in $\PM$ if and
only if $\alpha$ is bijective and for all $x,y \in X$
$\alpha(\nabla(x,y))$ is defined $\Leftrightarrow$
$\nabla'(\alpha(x),\alpha(y))$ defined, 
and in that case the equality 
$\alpha( \nabla(x,y) ) = \nabla'(\alpha(x),\alpha(y))$
holds.
 
\item[{\rm (c)}] The map $\alpha$ is an isomorphism in $\WPM$ 
if and only if $\alpha|_{D_\alpha}$ is 
bijective and for all $x,y \in X$
$\alpha(\nabla(x,y))$ is defined $\Leftrightarrow$
$\nabla'(\alpha(x),\alpha(y))$ defined, 
and in that case the equality 
$\alpha( \nabla(x,y) ) = \nabla'(\alpha(x),\alpha(y))$
holds.
 
\end{itemize} 
\end{prop}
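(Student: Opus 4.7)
The plan is to prove all three biconditionals by the same two-step procedure: show that being an isomorphism in the respective category forces the stated conditions, and conversely that those conditions allow the set-theoretic inverse of $\alpha$ (or of $\alpha|_{D_\alpha}$) to be promoted to an inverse morphism in the category. In each case the forward direction is essentially an immediate unpacking of ``isomorphism'': a two-sided inverse morphism $\beta$ yields the bijectivity assertion, and the fact that $\alpha$ is itself a morphism in the relevant category supplies the equation and, where applicable, the ``defined iff defined'' biconditional.

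For the backward direction of (a), I would set $\beta := \alpha^{-1}$ and verify that it is a homomorphism of magmas. For $x', y' \in X'$, putting $x = \beta(x')$ and $y = \beta(y')$ and applying the hypothesis to $(x,y)$ gives $\alpha(\nabla(x,y)) = \nabla'(x',y')$, whence applying $\beta$ yields $\nabla(\beta(x'),\beta(y')) = \beta(\nabla'(x',y'))$. Part (b) follows the same template but with definedness tracked: since $\alpha$ is total, ``$\alpha(\nabla(x,y))$ defined'' is the same as ``$\nabla(x,y)$ defined'', so the stated biconditional forces $\alpha \circ \nabla$ and $\nabla' \circ (\alpha \times \alpha)$ to share a common domain, upgrading the weak partial equality $\approx$ to honest equality as partial functions. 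The substitution $x = \beta(x')$, $y = \beta(y')$ then delivers the symmetric condition for $\beta = \alpha^{-1}$.

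Part (c) is the subtlest case and I expect it to be the main obstacle, because $\alpha$ may be genuinely partial and $\WPM$ is built on the weak equality $\approx$. The strategy is to extract from the bijectivity of $\alpha|_{D_\alpha}$ the inverse partial function $\beta := (\alpha|_{D_\alpha})^{-1}$, then to check that the stated definedness biconditional for $\alpha$ implies its symmetric counterpart for $\beta$, and finally to run the substitution argument of (a) and (b) to obtain $\beta(\nabla'(x',y')) = \nabla(\beta(x'),\beta(y'))$ whenever either side is defined. The bookkeeping of which elements $\alpha$ and $\beta$ are defined at is the only genuinely delicate point; once it is handled, the morphism property of $\beta$ in $\WPM$ and its status as a two-sided inverse to $\alpha$ both follow directly.
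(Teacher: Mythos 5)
Your proposal is correct and takes essentially the same route as the paper: the forward directions unpack the two-sided inverse morphism, and the backward directions promote $\beta = \alpha^{-1}$ to an inverse morphism via exactly the substitution $x = \beta(x')$, $y = \beta(y')$ that the paper uses in its proof of (b). The only organizational difference is that the paper proves (b) first and then obtains (a) as a special case and (c) ``by restriction,'' whereas you argue each part directly; the paper's treatment of the definedness bookkeeping in (c) is no more detailed than yours.
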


\begin{proof}
(a) Follows from (b). Now we prove (b).
First we show the ``only if'' statement.
Suppose that $\alpha : (X,\nabla) \to (X',\nabla')$ 
is an isomorphism in $\PM$.
Then there is a morphism $\beta : (X',\nabla') \to (X,\nabla)$
in $\PM$ such that $\beta \circ \alpha = \I_X$ and 
$\alpha \circ \beta = \I_{X'}$. Take $x,y \in X$.
If $\alpha(\nabla(x,y))$ is defined, then, since 
$\alpha$ is morphism in $\PM$ it follows that  
$\nabla'(\alpha(x),\alpha(y))$
is also defined. If $\nabla'(\alpha(x),\alpha(y))$
is defined, then $\beta(\nabla'(\alpha(x),\alpha(y)))$
is defined, which, since $\beta$ is a morphism in $\PM$,
implies that $\nabla( \beta(\alpha(x)),\beta(\alpha(y)) ) =
\nabla(x,y)$ is defined. Thus $\alpha(\nabla(x,y))$ is defined.
Now we show the ``if'' statement. Suppose that
$\alpha$ is bijective and for all $x,y \in X$
$\alpha(\nabla(x,y))$ is defined $\Leftrightarrow$
$\nabla'(\alpha(x),\alpha(y))$ defined, 
and in that case the equality 
$\alpha( \nabla(x,y) ) = \nabla'(\alpha(x),\alpha(y))$
holds. Put $\beta = \alpha^{-1}$ and take $x',y' \in X'$
such that $\beta(\nabla'(x',y'))$ is 
defined. Take $x,y \in X$ with $\alpha(x)=x'$ and $\alpha(y)=y'$.
Then $\beta(\nabla'(\alpha(x),\alpha(y)))$ is defined.
From the assumptions it follows that
$\nabla(x,y) = \beta(\alpha(\nabla(x,y)))$ 
also is defined and that
$\beta(\nabla'(x',y')) = \nabla( \beta(x'),\beta(y') )$.
Thus, $\beta$ is a morphism in $\PM$.
Clearly, $\beta \circ \alpha = \I_X$ and 
$\alpha \circ \beta = \I_{X'}$ so that $\alpha$
is an isomorphism in $\PM$.
The statement in (c) follows from (b) by restriction.
\end{proof}

\begin{defi}
Suppose that $(X,\nabla)$ is a partial magma and
$\alpha \in \Pfun(X,X)$. We say that the triple 
$(X,\nabla,\alpha)$ is: 
\begin{itemize}[itemsep=1mm,topsep=1mm]

\item \emph{partially Hom-associative} if 
$\nabla \circ ( \alpha \times \nabla ) \approx
\nabla \circ ( \nabla \times \alpha)$ as partial functions;

\item \emph{Hom-associative} if 
$\nabla \circ ( \alpha \times \nabla ) =
\nabla \circ ( \nabla \times \alpha)$ as partial functions;

\item \emph{partially associative} if $(X,\nabla,\I)$ 
is partially Hom-associative;

\item \emph{associative} if $(X,\nabla,\I)$ is Hom-associative.

\end{itemize}
\end{defi}

\subsection{Magmas of order two}

For the rest of this section, $(X,\nabla)$ denotes a partial magma
with $X = \{ 1,2 \}$. We will 
write $\nabla(x,y) = 3$ when $\nabla(x,y)$ is 
not defined. A multiplication table defined by
$\nabla(1,1) = a$, $\nabla(1,2) = b$, 
$\nabla(2,1) = c$ and 
$\nabla(2,2) = d$
will be written in short hand as $abcd$. So, for instance,
the multiplication table $2 1 3 1$ is to be interpreted as
$\nabla(1,1) = 2$, $\nabla(1,2) = 1$, 
$\nabla(2,1)$ is undefined and $\nabla(2,2) = 1$
Clearly, there are $3^4 = 81$ different such multiplication tables.
Put $\overline{X} = X \cup \{ 3 \}$ and define the function 
$t : \overline{X} \to \overline{X}$ by 
$t(1)=2$, $t(2)=1$ and $t(3) = 3$.
The next result is probably folklore.
Nonetheless, for the convenience of the reader, we include
it as well as a proof of it here.

\begin{prop}\label{prop:tiso}
Suppose that $a,b,c,d,e,f,g,h \in \overline{X}$. 
The multiplication tables $abcd$ and $efgh$ yield 
isomorphic partial magmas  
if and only if $a=e$, $b=f$, $c=g$ and $d=h$, or
$t(a)=h$, $t(b)=g$, $t(c)=f$ and $t(d)=e$.
\end{prop}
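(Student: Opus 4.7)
The plan is to apply Proposition \ref{prop:Miso}(b), which characterises isomorphisms of partial magmas as bijections $\alpha : X \to X'$ satisfying, for all $x,y \in X$, that $\alpha(\nabla(x,y))$ is defined if and only if $\nabla'(\alpha(x),\alpha(y))$ is defined, and that they agree when defined. Since $X = X' = \{1,2\}$, there are exactly two bijections $X \to X$, namely the identity $\I$ and (the restriction of) $t$. So one obtains the set of isomorphism classes of partial magmas on $\{1,2\}$ by identifying each table with at most two others via these two bijections.

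For the ``if'' direction, I would simply verify the two cases separately. In the case $a=e$, $b=f$, $c=g$, $d=h$, the identity map witnesses an isomorphism trivially. In the case $t(a)=h$, $t(b)=g$, $t(c)=f$, $t(d)=e$, I would check that $\alpha := t|_X$ is an isomorphism by evaluating $\alpha(\nabla(x,y))$ and $\nabla'(\alpha(x),\alpha(y))$ on each of the four pairs $(x,y) \in X \times X$: for instance $\alpha(\nabla(1,1)) = t(a) = h = \nabla'(2,2) = \nabla'(\alpha(1),\alpha(1))$, and analogously for the other three pairs. Since $t(3)=3$ and $t$ is a bijection of $\overline{X}$, the hypothesis $t(a)=h$ already forces $a$ to be defined if and only if $h$ is, so the definedness condition in Proposition \ref{prop:Miso}(b) comes for free.

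For the ``only if'' direction, suppose $\alpha$ is an isomorphism between the partial magmas determined by $abcd$ and $efgh$. Since $\alpha$ is a bijection of $\{1,2\}$, it is either $\I$ or $t|_X$. If $\alpha = \I$, then the compatibility condition of Proposition \ref{prop:Miso}(b) evaluated on each of $(1,1), (1,2), (2,1), (2,2)$ yields $a=e$, $b=f$, $c=g$, $d=h$. If $\alpha = t|_X$, then the same four evaluations, together with the convention $\alpha(3)=t(3)=3$ to handle undefined entries, give $t(a)=h$, $t(b)=g$, $t(c)=f$, $t(d)=e$.

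The argument is entirely a bookkeeping exercise once Proposition \ref{prop:Miso}(b) has reduced the problem to inspecting two candidate bijections; the only mild subtlety is to confirm that the definedness clause in that proposition is automatically implied by the displayed equalities, which is immediate since $t$ permutes $\{1,2\}$ and fixes the ``undefined'' symbol $3$.
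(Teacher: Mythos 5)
Your proposal is correct and follows essentially the same route as the paper's own proof: both directions are handled by invoking Proposition \ref{prop:Miso}(b), observing that the only bijections of $\{1,2\}$ are $\I$ and $t|_X$, and then checking the four table entries in each of the two cases. Your additional remark that the definedness clause is automatic because $t$ fixes the symbol $3$ is a small but welcome clarification that the paper leaves implicit.
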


\begin{proof}
Let $\nabla$ and $\nabla'$ denote the partial maps
$X \times X \to X$ defined by the multiplication tables
$abcd$ and $efgh$ respectively.
First we show the ''if'' statement. We consider two cases.
Case 1: $a=e$, $b=f$, $c=g$ and $d=h$. 
By Proposition \ref{prop:Miso}, $\I$
is an isomorphism of partial magmas $(X,\nabla) \to (X,\nabla')$.
Case 2: $t(a)=h$, $t(b)=g$, $t(c)=f$ and $t(d)=e$.
Then: 
\[
\begin{array}{ccccccccl}
t(\nabla(1,1)) &=& t(a) &=& h &=& \nabla'(2,2) &=& 
\nabla'(t(1),t(1)) \\
t(\nabla(1,2)) &=& t(b) &=& g &=& \nabla'(2,1) &=& 
\nabla'(t(1),t(2)) \\
t(\nabla(2,1)) &=& t(c) &=& f &=& \nabla'(1,2) &=& 
\nabla'(t(2),t(1)) \\
t(\nabla(2,2)) &=& t(d) &=& e &=& \nabla'(1,1) &=& 
\nabla'(t(2),t(2)). 
\end{array}
\]
Therefore, by Proposition \ref{prop:Miso} again, 
$t$ is an isomorphism of partial magmas $(X,\nabla) \to (X,\nabla')$.
Now we show the ''only if'' statement. Suppose that 
$F : (X,\nabla) \to (X,\nabla')$ is an isomorphism of
partial magmas. By Proposition~\ref{prop:Miso},
$F$ is a bijection $X \to X$ so that $F = \I$ or $F = t$. 
Case 1: $F = \I$. Then:
\[
\begin{array}{ccccccc}
F( \nabla(1,1) ) & = & \nabla'(F(1),F(1)) &\Rightarrow& a & = & e \\
F( \nabla(1,2) ) & = & \nabla'(F(1),F(2)) &\Rightarrow& b & = & f \\
F( \nabla(2,1) ) & = & \nabla'(F(2),F(1)) &\Rightarrow& c & = & g \\
F( \nabla(2,2) ) & = & \nabla'(F(2),F(2)) &\Rightarrow& d & = & h. \\
\end{array}
\]
Case 2: $F = t$. Then:
\[
\begin{array}{ccccccc}
F( \nabla(1,1) ) &=& \nabla'(F(1),F(1)) &\Rightarrow& t(a) &=& h \\
F( \nabla(1,2) ) &=& \nabla'(F(1),F(2)) &\Rightarrow& t(b) &=& g \\
F( \nabla(2,1) ) &=& \nabla'(F(2),F(1)) &\Rightarrow& t(c) &=& f \\
F( \nabla(2,2) ) &=& \nabla'(F(2),F(2)) &\Rightarrow& t(d) &=& e. \\
\end{array}
\]
\end{proof}

\begin{prop}\label{prop:isomorphismclasses}
The 81 multiplication tables of partial magma structures
defined on $X$ is partitioned into the following 45 isomorphism classes:
\begin{enumerate}[widest=(4)]

\item[(1)] 
$3333$ \
(2) $1333 \cong 3332$ \
(3) $2333 \cong 3331$ \
(4) $3133 \cong 3323$ 

\item[(5)] $3233 \cong 3313$ \
(6) $1133 \cong 3322$ \
(7) $1233 \cong 3312$ \
(8) $2133 \cong 3321$ 

\item[(9)] $2233 \cong 3311$ \
(10) $1313 \cong 3232$ \
(11) $1323 \cong 3132$ \
(12) $2313 \cong 3231$ 

\item[(13)] $2323 \cong 3131$ \quad
(14) $1331 \cong 2332$ \quad
(15) $1332$ \quad
(16) $2331$ 

\item[(17)] $3113 \cong 3223$ \quad
(18) $3123$ \quad
(19) $3213$ \quad
(20) $1113 \cong 3222$

\item[(21)] $1123 \cong 3122$ \
(22) $1213 \cong 3212$ \
(23) $2113 \cong 3221$ \
(24) $1223 \cong 3112$ 

\item[(25)]
$2123 \cong 3121$ \
(26) $2213 \cong 3211$ \
(27) $2223 \cong 3111$ 
(28) $1131 \cong 2322$ 

\item[(29)] $1132 \cong 1322$ \
(30) $1231 \cong 2312$ \
(31) $2131 \cong 2321$ \
(32) $1232 \cong 1312$ 

\item[(33)] $2132 \cong 1321$ \
(34) $2231 \cong 2311$ \
(35) $2232 \cong 1311$ \
(36) $1111 \cong 2222$

\item[(37)] 
$1112 \cong 1222$ \
(38) $1121 \cong 2122$ \
(39) $1211 \cong 2212$ \
(40) $2111 \cong 2221$ 

\item[(41)] $1122$ \
(42) $1212$ \
(43) $1221 \cong 2112$ \
(44) $2121$ \
(45) $2211$

\end{enumerate}
\end{prop}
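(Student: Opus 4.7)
The plan is to apply Proposition \ref{prop:tiso} directly. That result says that two tables $abcd$ and $efgh$ give isomorphic partial magmas if and only if either they are equal, or $efgh = t(d)t(c)t(b)t(a)$. Hence the operation $\sigma(abcd) := t(d)t(c)t(b)t(a)$ is an involution on the set of $81$ tables, and isomorphism classes are precisely the orbits of $\sigma$. Each orbit therefore has size $1$ or $2$, so the enumeration reduces to counting fixed points and then exhibiting the explicit pairings.

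To get the number $45$, I would determine the fixed tables. A table $abcd$ satisfies $\sigma(abcd) = abcd$ exactly when $a = t(d)$, $b = t(c)$, $c = t(b)$, $d = t(a)$; since $t$ is an involution on $\overline{X}$, the last two equations follow from the first two. So a fixed table is determined by a free choice of $a \in \overline{X}$ (then $d = t(a)$) and a free choice of $b \in \overline{X}$ (then $c = t(b)$), giving $3 \cdot 3 = 9$ fixed tables. The remaining $81 - 9 = 72$ tables fall into $36$ orbits of size $2$, for a total of $9 + 36 = 45$ orbits, which matches the length of the list.

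To produce the explicit partition, I would run through the $81$ tables lexicographically in $(a,b,c,d) \in \overline{X}^4$ and, for each one not yet listed, compute $\sigma(abcd)$ and record the orbit. A direct check shows that the $9$ fixed tables are $3333$, $1332$, $2331$, $3123$, $3213$, $1122$, $1212$, $2121$, $2211$, which are exactly the singletons (1), (15), (16), (18), (19), (41), (42), (44), (45) in the statement; every other table appears together with its $\sigma$-image in one of the $36$ two-element classes. The main obstacle is purely bookkeeping, not mathematical: one has to enumerate carefully enough to avoid double-counting orbits of $\sigma$, and comparison of the resulting grouping with the list in the statement completes the proof.
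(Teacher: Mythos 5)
Your proposal is correct and follows essentially the same route as the paper, which likewise disposes of the statement as a direct (if tedious) application of Proposition~\ref{prop:tiso}. Your additional observation that the map $abcd \mapsto t(d)t(c)t(b)t(a)$ is an involution with exactly $9$ fixed tables, forcing $9 + (81-9)/2 = 45$ classes, is a pleasant sanity check on the count but does not change the underlying argument.
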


\begin{proof}
This is a straightforward but tedious 
application of Proposition~\ref{prop:tiso}. 
\end{proof}

A partial map $\alpha : X \to X$ will below be 
encoded by a binary word $ab$
where $a,b \in \overline{X}$ meaning that $\alpha(1) = a$ and 
$\alpha(2)=b$ where we put $a=3$ or $b=3$ when $\alpha(1)$ respectively 
$\alpha(2)$ is not defined. So, for instance, the word $23$ means the partial
map $\alpha$ with $\alpha(1)=2$ and $\alpha(2)$ is undefined.
Note that with this notation $\Pfun(X,X) = \{ 3     3,
     1     3,
     2     3,
     3     1,
     3     2,
     1     1,
     1     2,
     2     1,
     2     2 \}$.
In the next result, we determine the  
(weak) partial endomorphisms and the (weak) Hom-associative
structures for the first representative in 
each of the 45 isomorphism classes in Proposition 
\ref{prop:isomorphismclasses}. This is achieved by an 
elementary case by case analysis using simple MATLAB-programs,
the codes of which can be obtained from the author upon request.

\begin{thm}\label{thm:main}
(a) The set of weak partial endomorphisms for each of the first 
representatives in the 45 isomorphism classes in 
Proposition \ref{prop:isomorphismclasses} is:
\begin{enumerate}[widest=(4)]

\item[(1)] $\Pfun(X,X)$ \
(2) $\Pfun(X,X)$ \
(3) $\{ 3  3,
     1     3,
     2     3,
     3     1,
     3     2,
     1     2,
     2     1,
     2     2 \}$ 

\item[(4)] $\Pfun(X,X)$ \
(5) $\Pfun(X,X)$ \
(6) $\Pfun(X,X)$ \
(7) $\Pfun(X,X)$ 

\item[(8)] $\{ 3  3,
     1     3,
     2     3,
     3     1,
     3     2,
     1     2,
     2     1,
     2     2  \}$ \
(9) $\{ 3  3,
     1     3,
     2     3,
     3     1,
     3     2,
     1     2,
     2     1,
     2     2  \}$ \
     
\item[(10)] $\Pfun(X,X)$ \
(11) $\Pfun(X,X)$ 
(12) $\{ 3  3,
     1     3,
     2     3,
     3     1,
     3     2,
     1     2,
     2     1,
     2     2  \}$ 

\item[(13)] $\{ 3  3,
     1     3,
     2     3,
     3     1,
     3     2,
     1     2,
     2     1,
     2     2  \}$ \
(14) $\{ 3  3,
     1     3,
     3     1,
     3     2,
     1     1,
     1     2 \}$ 

\item[(15)] $\Pfun(X,X)$ \
(16) $\{ 3  3,
     1     3,
     2     3,
     3     1,
     3     2,
     1     2,
     2     1 \}$

\item[(17)] $\{ 3  3,
     1     3,
     2     3,
     3     1,
     3     2,
     1     1,
     1     2,
     2     2 \}$ \
(18) $\Pfun(X,X)$ \
(19) $\Pfun(X,X)$ \
     
\item[(20)] $\{ 3  3,
     1     3,
     2     3,
     3     1,
     3     2,
     1     1,
     1     2,
     2     2 \}$ \
(21) $\Pfun(X,X)$ \
(22) $\Pfun(X,X)$ 

\item[(23)] $\{ 3  3,
     1     3,
     2     3,
     3     1,
     3     2,
     1     2,
     2     2 \}$ \
(24) $\{ 3  3,
     1     3,
     2     3,
     3     1,
     3     2,
     1     1,
     1     2,
     2     2 \}$ \
     
\item[(25)] $\{ 3  3,
     1     3,
     2     3,
     3     1,
     3     2,
     1     2,
     2     1,
     2     2 \}$ \
(26) $\{ 3  3,
     1     3,
     2     3,
     3     1,
     3     2,
     1     2,
     2     1,
     2     2 \}$
     
\item[(27)] $\{ 3  3,
     1     3,
     2     3,
     3     1,
     3     2,
     1     2,
     2     1,
     2     2 \}$ \
(28) $\{ 3  3,
     1     3,
     3     1,
     3     2,
     1     1,
     1     2 \}$ 
     
\item[(29)] $\Pfun(X,X)$ \
(30) $\{ 3  3,
     1     3,
     3     1,
     3     2,
     1     1,
     1     2 \}$ \
(31) $\{ 3  3,
     1     3,
     2     3,
     3     1,
     3     2,
     1     2,
     2     1 \}$
     
\item[(32)] $\Pfun(X,X)$ \
(33) $\{ 3  3,
     1     3,
     2     3,
     3     2,
     1     2,
     2     2 \}$ \
(34) $\{ 3  3,
     1     3,
     2     3,
     3     1,
     3     2,
     1     2,
     2     1 \}$
     
\item[(35)] $\{ 3  3,
     1     3,
     2     3,
     3     2,
     1     2,
     2     2 \}$ \
(36) $\{ 3  3,
     1     3,
     3     1,
     3     2,
     1     1,
     1     2 \}$ 
     
\item[(37)] $\{ 3  3,
     1     3,
     2     3,
     3     1,
     3     2,
     1     1,
     1     2,
     2     2 \}$ \
(38) $\{ 3  3,
     1     3,
     3     1,
     3     2,
     1     1,
     1     2 \}$ 
     
\item[(39)] $\{ 3  3,
     1     3,
     3     1,
     3     2,
     1     1,
     1     2 \}$ \
(40) $\{ 3  3,
     1     3,
     2     3,
     3     1,
     3     2,
     1     2 \}$
(41) $\Pfun(X,X)$
     
\item[(42)] $\Pfun(X,X)$ \
(43) $\{ 3  3,
     1     3,
     3     1,
     3     2,
     1     1,
     1     2 \}$ 
     
\item[(44)] $\{ 3  3,
     1     3,
     2     3,
     3     1,
     3     2,
     1     2,
     2     1 \}$ \
(45) $\{ 3  3,
     1     3,
     2     3,
     3     1,
     3     2,
     1     2,
     2     1 \}$
       
\end{enumerate}
(b) The set of partial endomorphisms
for each of the first 
representatives in the 45 isomorphism classes in 
Proposition \ref{prop:isomorphismclasses} is:
\begin{enumerate}[widest=(4)]

\item[(1)] $\Pfun(X,X)$ \
(2) $\{ 3  3,
     1     3,
     3     2,
     1     2 \}$ \
(3) $\{ 3  3,
     2     3,
     1     2 \}$ \
(4) $\{ 3  3,
     3     1,
     3     2,
     1     2 \}$
\item[(5)] $\{ 3  3,
     1     3,
     2     3,
     1     2 \}$ \
(6) $\{ 3  3,
     3     2,
     1     2 \}$ \
(7) $\{ 3  3,
     1     3,
     1     2 \}$ \
(8) $\{ 3  3,
     1     2 \}$ 
     
\item[(9)] $\{ 3  3,
     2     3,
     1     2 \}$ \
(10) $\{ 3  3,
     3     2,
     1     2 \}$ \
(11) $\{ 3  3,
     1     3,
     1     2 \}$ \
(12) $\{ 3  3,
     1     2 \}$

\item[(13)] $\{ 3  3,
     2     3,
     1     2  \}$ \ 
(14) $\{ 3  3,
     1     2 \}$ \
(15) $\{ 3  3,
     1     3,
     2     3,
     3     1,
     3     2,
     1     2,
     2     1 \}$
     
\item[(16)] $\{ 3  3,
     1     2,
     2     1 \}$ \
(17) $\{ 3  3,
     3     1,
     3     2,
     1     2 \}$ \
(18) $\{ 3  3,
     1     2,
     2     1 \}$ \
(19) $\{ 3  3,
     1     2,
     2     1 \}$
     
\item[(20)] $\{ 3  3,
     3     2,
     1     2 \}$ \
(21) $\{ 3  3,
     1     2 \}$ \
(22) $\{ 3  3,
     1     2 \}$ \
(23) $\{ 3  3,
     1     2 \}$ 
     
\item[(24)] $\{ 3  3,
     1     3,
     1     2 \}$
(25) $\{ 3  3,
     1     2 \}$ \
(26) $\{ 3  3,
     1     2 \}$ \
(27) $\{ 3  3,
     2     3,
     1     2 \}$

\item[(28)] $\{ 3  3,
     1     2 \}$ \
(29) $\{ 3  3,
     3     1,
     3     2,
     1     2 \}$ \
(30) $\{ 3  3,
     1     2 \}$ \
(31) $\{ 3  3,
     1     2 \}$ 
     
\item[(32)] $\{ 3  3,
     1     3,
     2     3,
     1     2 \}$ \
(33) $\{ 3  3,
     1     2 \}$ \
(34) $\{ 3  3,
     1     2 \}$ \
(35) $\{ 3  3,
     1     2 \}$
     
\item[(36)] $\{ 3  3,
     1     1,
     1     2 \}$ \
(37) $\{ 3  3,
     3     1,
     3     2,
     1     1,
     1     2,
     2     2 \}$ \
(38) $\{ 3  3,
     1     1,
     1     2 \}$
     
\item[(39)] $\{ 3  3,
     1     1,
     1     2 \}$ \
(40) $\{ 3  3,
     1     2 \}$ \
(41) $\{ 3  3,
     1     1,
     1     2,
     2     1,
     2     2 \}$
     
\item[(42)] $\{ 3  3,
     1     1,
     1     2,
     2     1,
     2     2 \}$ \
(43) $\{ 3  3,
     1     1,
     1     2 \}$
(44) $\{ 3  3,
     1     2,
     2     1 \}$ \
(45) $\{ 3  3,
     1     2,
     2     1 \}$
       
\end{enumerate}
(c) The set of partial Hom-associative structures for each 
of the first representatives in the 45 isomorphism classes in 
Proposition \ref{prop:isomorphismclasses} is:
\begin{enumerate}[widest=(4)]

\item[(1)] $\Pfun(X,X)$ \
(2) $\Pfun(X,X)$ \
(3) $\Pfun(X,X)$ \
(4) $\Pfun(X,X)$ 

\item[(5)] $\Pfun(X,X)$ \
(6) $\Pfun(X,X)$ \
(7) $\{ 3     3,
     1     3,
     2     3,
     3     1,
     3     2,
     1     2,
     2     1,
     2     2 \}$
     
\item[(8)] $\Pfun(X,X)$ \quad
(9)  $\Pfun(X,X)$ \quad
(10)  $\Pfun(X,X)$ \quad
     
\item[(11)] $\{ 3     3,
     1     3,
     2     3,
     3     1,
     3     2,
     1     2,
     2     1,
     2     2 \}$ \quad
(12) $\Pfun(X,X)$ \quad
(13) $\Pfun(X,X)$ \quad

\item[(14)] $\Pfun(X,X)$ \
(15) $\Pfun(X,X)$ \
(16) $\Pfun(X,X)$ \
(17) $\Pfun(X,X)$ 

\item[(18)] $\Pfun(X,X)$ \
(19) $\Pfun(X,X)$ \
(20) $\Pfun(X,X)$ \
(21) $\{ 3     3,
     1     3,
     3     2,
     1     2 \}$
     
\item[(22)] $\{ 3     3,
     1     3,
     3     2,
     1     2 \}$ \quad
(23) $\{ 3     3,
     1     3,
     2     3,
     3     1,
     3     2,
     1     2,
     2     1,
     2     2 \}$ 
     
\item[(24)] $\{ 3     3,
     1     3,
     2     3,
     3     1,
     3     2,
     1     2,
     2     1,
     2     2 \}$ \quad
(25) $\{ 3     3,
     2     3,
     3     1,
     3     2,
     2     1,
     2     2 \}$
     
\item[(26)] $\{ 3     3,
     2     3,
     3     1,
     3     2,
     2     1,
     2     2 \}$ \quad
(27) $\Pfun(X,X)$ \quad
(28) $\Pfun(X,X)$
     
\item[(29)] $\{ 3     3,
     1     3,
     2     3,
     3     1,
     3     2,
     1     1,
     1     2 \}$ \quad
(30) $\{ 3     3,
     1     3,
     2     3,
     3     1,
     3     2,
     1     2,
     2     1 \}$
     
\item[(31)] $\{ 3     3,
     1     3,
     2     3,
     3     1,
     3     2,
     1     1,
     2     1,
     2     2 \}$ \quad
(32) $\{ 3     3,
     1     3,
     2     3,
     3     1,
     3     2,
     1     2,
     2     2 \}$
     
\item[(33)] $\{ 3     3,
     1     3,
     2     3,
     3     1,
     3     2,
     1     2,
     2     1 \}$ \quad
(34) $\{ 3     3,
     1     3,
     2     3,
     3     1,
     3     2,
     1     1,
     2     1,
     2     2 \}$
     
\item[(35)] $\Pfun(X,X)$ \quad
(36) $\Pfun(X,X)$ \quad
(37) $\{ 3     3,
     1     3,
     2     3,
     3     1,
     3     2,
     1     1,
     1     2 \}$
     
\item[(38)] $\{ 3     3,
     1     3 \}$ \quad
(39) $\{ 3     3,
     1     3 \}$ \quad
(40) $\{ 3     3,
     1     3,
     2     3,
     3     1,
     3     2,
     2     2 \}$ 
     
\item[(41)] $\{ 3     3,
     1     3,
     3     2,
     1     2 \}$ \
(42) $\{ 3     3,
     1     3,
     3     2,
     1     2 \}$ \
(43) $\{ 3     3,
     1     3,
     2     3,
     3     1,
     3     2,
     1     2,
     2     1 \}$
\item[(44)] $\{ 3     3,
     2     3,
     3     1,
     2     1 \}$ \
(45) $\{ 3     3,
     2     3,
     3     1,
     2     1 \}$
       
\end{enumerate}
In the 37 cases (1)-(24), (27)-(30), (32), (33), (35)-(37) and
(41)-(43) $(X,\nabla)$ is partially associative.

\vspace{1mm}

\noindent 
(d) The set of Hom-associative structures for each of the first 
representatives in each the 45 isomorphism classes in 
Proposition \ref{prop:isomorphismclasses} is:
\begin{enumerate}[widest=(4)]

\item[(1)] $\Pfun(X,X)$ \quad 
(2) $\{ 3              3,       
       1              3,       
       2              3,      
       3              2,           
       1              2,        
       2              2 \}$ \quad
(3) $\Pfun(X,X)$ 

\item[(4)] $\{ 3              3,       
       1              3,         
       3              1,     
       1              1 \}$ \
(5) $\{ 3              3,       
       2              3,      
       3              2,      
       2              2 \}$ \
(6) $\{ 3     3 \}$ \
(7) $\{ 33,      
       1    2 \}$ \
(8) $\{ 3     3 \}$
     
\item[(9)] $\{ 3              3,       
       2              3,       
       3              2,       
       2              2 \}$ \quad
(10) $\{ 3     3 \}$ \quad
(11) $\{ 3              3,     
       1              2 \}$ \quad
(12) $\{ 3     3 \}$ 

\item[(13)] $\{ 3              3,       
       2              3,       
       3              2,       
       2              2 \}$ \quad
(14) $\{ 3              3,
       2              3,       
       3              2,       
       2              2 \}$ \quad
(15) $\{ 3              3,       
       1              3,         
       3              2,      
       1              2 \}$
     
\item[(16)] $\{ 3              3,       
       2              3,       
       3              1,       
       2              1 \}$ \
(17) $\{ 3              3,       
       1              3,             
       3              1,             
       1              1 \}$ \      
(18) $\{ 3     3 \}$ \
(19) $\{ 3     3 \}$ \
(20) $\{ 3     3 \}$
     
\item[(21)] $\{ 3     3 \}$ \
(22) $\{ 3     3 \}$ \ 
(23) $\{ 3     3 \}$ \  
(24) $\{  3              3,         
       2              3,   
       1              2 \}$
(25)  $\{ 3     3 \}$ \
(26)  $\{ 3     3 \}$    
     
\item[(27)]  $\{ 33,       
       2              3,       
       3              2,       
       2              2 \}$ \
(28)  $\{ 3     3 \}$ \
(29)  $\{ 3     3 \}$ \
(30)  $\{ 3     3 \}$ \
(31)  $\{ 3     3 \}$ \
(32)  $\{ 3     3 \}$ 
     
\item[(33)]  $\{ 3     3 \}$ \ 
(34)  $\{ 3     3 \}$ \
(35)  $\{ 3     3 \}$ \
(36)  $\{ 33,       
       1              1,       
       1              2,       
       2              1,       
       2              2 \}$ \
(37) $\{ 33,         
       1              1,       
       1              2 \}$     

\item[(38)] $\{ 3     3 \}$ \quad 
(39) $\{ 3     3 \}$ \quad
(40) $\{ 3 3,       
       2 2 \}$ \quad   
(41) $\{ 33, 12 \}$ \quad
(42) $\{ 33,       
       12 \}$ 
       
\item[(43)] $\{ 33,       
       12,       
       21 \}$ \quad
(44) $\{ 33,    
         21 \}$ \quad
(45) $\{ 33,       
         21 \}$     
       
\end{enumerate}
In the 13 cases (1)-(3), (7), (11), (15),
(24), (36), (37) and (41)-(43)   
$(X,\nabla)$ is associative.
\end{thm}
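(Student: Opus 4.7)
The proof is, in essence, a finite brute-force verification: there are only $45$ representative multiplication tables and $|\Pfun(X,X)| = 9$ possible partial self-maps $\alpha$, so each of the four sets claimed in (a)--(d) is determined by inspecting at most $45 \cdot 9 = 405$ pairs $((X,\nabla),\alpha)$ against the relevant defining equation. My plan is to fix a uniform case-analysis template, carry it out for each representative, and then read off the answers.

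For parts (a) and (b), I fix a representative $abcd$ and an $\alpha = pq \in \Pfun(X,X)$ and run through the four pairs $(x,y) \in X \times X$. For each pair I compute both $\alpha(\nabla(x,y))$ (using the table $abcd$ applied to $(x,y)$, then $\alpha$) and $\nabla(\alpha(x),\alpha(y)) = \nabla(\alpha(x), \alpha(y))$ (using $\alpha$ first, then the table). I then test the two conditions: for (a), partial equality $\approx$ (they must agree whenever both are defined, i.e., both belong to $\{1,2\}$); for (b), full equality as partial functions (additionally, definedness on one side implies definedness on the other). Ticking off which $\alpha$'s pass each test for each representative produces the lists in (a) and (b).

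For parts (c) and (d), the procedure is the same but the object tested is associativity with $\alpha$ inserted: for each representative $abcd$, each $\alpha = pq$, and each of the eight triples $(x,y,z) \in X^3$, I compute the two products $\nabla(\alpha(x),\nabla(y,z))$ and $\nabla(\nabla(x,y),\alpha(z))$ using the table, and check $\approx$ (for (c)) or $=$ (for (d)). Whenever $\alpha = \I$ is admissible as a partial Hom-associative (resp.\ Hom-associative) structure, the corresponding representative is partially associative (resp.\ associative); collecting the indices for which $\I = 12 \in \Pfun(X,X)$ appears in the list yields the final sentences of (c) and (d).

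The only real obstacle is bookkeeping rather than mathematics: the $405$ triples ``$((X,\nabla),\alpha, \text{test})$'' must be tabulated without arithmetic slips, and one must be careful to distinguish $\approx$ from $=$ (a mismatch here is exactly what separates (a) from (b) and (c) from (d)). To keep the verification reliable I would implement the four tests as short loops over $abcd$ and $pq$ in a small script (as the author notes, MATLAB suffices), and then cross-check by hand on a handful of representatives to confirm the output; the explicit enumeration given in the theorem statement is exactly what such a script produces.
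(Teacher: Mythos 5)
Your proposal is correct and is essentially the paper's own proof: the author likewise establishes the theorem by an exhaustive case-by-case check of all representatives against all nine elements of $\Pfun(X,X)$, carried out with short MATLAB scripts, testing $\approx$ versus $=$ for the endomorphism and Hom-associativity conditions and reading off the (partial) associativity statements from whether the identity map $12$ occurs in the relevant list. No further comparison is needed.
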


\section{Hom-associative magma algebras}\label{sec:magmaalgebras}

\subsection{Hom-algebras}

For the rest of this paper, $K$ denotes a field
and $A$ denotes a $K$-vector space.
Suppose that $\mu$ is a $K$-bilinear map 
$A \times A \to A$ and that $\tau$ is a $K$-linear map $A \to A$.
Recall from \cite{makhlouf2010} that the triple 
$(A,\mu,\tau)$ is then called a \emph{Hom-algebra}.
From \cite{makhlouf2010,makhlouf2020}, we extract the following:

\begin{defi}\label{def:homproperties}
A Hom-algebra $(A,\mu,\tau)$ is said to be: 
\begin{itemize}[topsep=1mm,itemsep=1mm]

\item \emph{multiplicative}
if $\tau \circ \mu = \mu \circ (\tau \times \tau)$; 

\item \emph{Hom-associative} if 
$\mu \circ (\tau \times \mu) = \mu \circ (\mu \times \tau)$;

\item \emph{associative} if $(A,\mu,\I)$ is 
Hom-associative. 

\end{itemize}
\end{defi}

From now on, we fix a $K$-vector space basis 
$B = \{ e_i \}_{i \in I}$ for $A$. The 
maps $\mu$ and $\tau$ are uniquely determined
by their \emph{structure constants} $C_{ij}^k,t_{ij} \in K$ 
which are defined by 
$\mu(e_i,e_j) = \sum_{k \in I} C_{ij}^k e_k$ and
$\tau(e_i) = \sum_{j \in I} t_{ij} e_j$
for $i,j \in I$. Note that given $i,j \in I$ ($i \in I$),
then $C_{ij}^k = 0$ ($t_{ij} = 0$) for all but finitely
many $k \in I$ ($j \in I$) making the sums above well defined.
The properties in Definition \ref{def:homproperties} can now
be formulated using structure constants. Indeed,
from the discussion in \cite[Section 4.4]{makhlouf2020}
we extract the following:

\begin{prop}\label{prop:structure}
A Hom-algebra $(A,\mu,\tau)$ is:
\begin{itemize}[widest=(a),topsep=1mm,itemsep=1mm]

\item[{\rm (a)}] multiplicative if and only if
for all $i,j,s \in I$
\[
\sum_{p \in I} t_{sp} C_{ij}^p  = 
\sum_{p,q \in I} t_{pi} t_{qj} C_{pq}^s;
\]

\item[{\rm (b)}] Hom-associative if and only if
for all $i,j,k,s \in I$
\[
\sum_{l,m \in I} t_{il} C_{jk}^m C_{lm}^s 
= \sum_{l,m \in I} t_{mk} C_{ij}^l C_{lm}^s;
\]

\item[{\rm (c)}] associative if and only if for all $i,j,k,s \in I$
\[
\sum_{m \in I} C_{jk}^m C_{im}^s  =
\sum_{l \in I} C_{ij}^l C_{lk}^s.
\]

\end{itemize}
\end{prop}

We now introduce the following weakening of 
Definition \ref{def:homproperties}:

\begin{defi}\label{def:partilconcepts}
We say that a Hom-algebra $(A,\mu,\tau)$ is: 
\begin{itemize}[topsep=1mm,itemsep=1mm]

\item \emph{partially $B$-multiplicative} if
$\tau(\mu(a,b)) = \mu(\tau(a),\tau(b))$ holds for all $a,b \in B$ 
with $\tau(\mu(a,b)) \neq 0 \neq \mu(\tau(a),\tau(b))$;

\item \emph{partially $B$-Hom-associative} if
$\mu(\tau(a),\mu(b,c)) = \mu(\mu(a,b),\tau(c))$ holds
for all $a,b,c \in B$ with 
$\mu(\tau(a),\mu(b,c)) \neq 0 \neq \mu(\mu(a,b),\tau(c))$;

\item \emph{partially $B$-associative} if
$(A,\mu,\I)$ is partially $B$-Hom-associative.

\end{itemize}
\end{defi}

In analogy with Proposition \ref{prop:structure},
it is a straightforward exercise 
to see that the properties in Definition
\ref{def:partilconcepts} can be formulated using
structure constants:

\begin{prop}
A Hom-algebra $(A,\mu,\tau)$ is:
\begin{itemize}[widest=(a),topsep=1mm,itemsep=1mm]

\item[{\rm (a)}] partially $B$-multiplicative 
if and only if for all $i,j \in I$ with both
\[
\Biggl( \sum_{p \in I} t_{sp} C_{ij}^p \Biggr)_{s \in I} 
\quad \mbox{and} \quad 
\Biggl( \sum_{p,q \in I} t_{pi} t_{qj} C_{pq}^s \Biggr)_{s \in I}
\quad \mbox{nonzero},
\]
then $\left( \sum_{p \in I} t_{sp} C_{ij}^p \right)_{s \in I} = 
\left( \sum_{p,q \in I} t_{pi} t_{qj} C_{pq}^s \right)_{s \in I}$;

\item[{\rm (b)}] partially $B$-Hom-associative 
if and only if for all $i,j,k \in I$ with both
\[
\Biggl( \sum_{l,m \in I} t_{il} C_{jk}^m C_{lm}^s 
\Biggr)_{s \in I}
\quad \mbox{and} \quad
\Biggl( \sum_{l,m \in I} t_{mk} C_{ij}^l C_{lm}^s 
\Biggr)_{s \in I}
\quad \mbox{nonzero}, 
\]
then $\left( \sum_{l,m \in I} t_{il} C_{jk}^m C_{lm}^s 
\right)_{s \in I} 
= \left(\sum_{l,m \in I} t_{mk} C_{ij}^l C_{lm}^s
\right)_{s \in I}$;

\item[{\rm (c)}] partially $B$-associative 
if and only if for all $i,j,k \in I$ with both
\[
\Biggl( \sum_{m \in I} C_{jk}^m C_{im}^s \Biggr)_{s \in I}
\quad \mbox{and} \quad
\Biggl( \sum_{l \in I} C_{ij}^l C_{lk}^s \Biggr)_{s \in I}
\quad \mbox{nonzero}, 
\]
then $\left( \sum_{m \in I} C_{jk}^m C_{im}^s \right)_{s \in I} =
\left( \sum_{l \in I} C_{ij}^l C_{lk}^s \right)_{s \in I}$.

\end{itemize}
\end{prop}

\subsection{Magma algebras}

Suppose that $(X,\nabla)$ is a partial magma and let
$\alpha \in \Pfun(X,X)$. We now show  
that $\nabla$ and $\alpha$ induce, in a natural way, 
a Hom-algebra structure on the so called
magma algebra $K[X]$ of $X$ over $K$ 
(see Theorem \ref{thm:secondmain} below). 
Recall that the elements of $K[X]$ are formal sums
$\sum_{x \in X} k_x x$, for some $k_x \in K$, satisfying
$k_x = 0$ for all but finitely many $x \in X$.
Take $k \in K$. Suppose that $a := \sum_{x \in X} l_x x \in K[X]$ and 
$b := \sum_{x \in X} m_x x \in K[X]$. If we put
\[
k a = \sum_{x \in X}  (k l_x) x \quad \quad \mbox{and} \quad \quad
a+b = \sum_{x \in X} (l_x+m_x)x,
\]
then, with these operations, $K[X]$ is a $K$-vector space
having the elements of $B := X$ as a basis.
Let $\tau_\alpha : K[X] \to K[X]$ be defined in the following way.
Take $x \in X$. Put
$\tau_\alpha(x) = \alpha(x)$, if $\alpha(x)$ is defined, and
$\tau_\alpha(x) = 0$, otherwise. 
Using structure constants this means that
for all $x,y \in X$:
\[
t_{xy} = 
\left\{
\begin{array}{cl}

1 & \mbox{if $\alpha(x)$ is defined and $y = \alpha(x)$;} 
\\[1mm]
0 & \mbox{if $\alpha(x)$ is not defined, 
or $\alpha(x)$ is defined but $y \neq \alpha(x)$.} 

\end{array}
\right.
\]
Then we $K$-linearly extend $\tau_\alpha$ to $K[X]$.
Let $\mu_\nabla : K[X] \times K[X] \to K[X]$ be defined
in the following way. Take $x,y \in X$. Put
$\mu_\nabla(x,y) = \nabla(x,y)$, if $\nabla(x,y)$ is defined,
and $\mu_\nabla(x,y) = 0$, otherwise.
In the language of structure constants, 
this amounts to saying that for all $x,y,z \in X$:
\[
C_{xy}^z = 
\left\{
\begin{array}{cl}
1 & \mbox{if $\nabla(x,y)$ is defined and $z = \nabla(x,y)$;} 
\\[1mm]
0 & \mbox{if $\nabla(x,y)$ is not defined,
or $\nabla(x,y)$ is defined but $z \neq \nabla(x,y)$.} 
\end{array}
\right.
\]
Then we $K$-bilinearly extend $\mu_\nabla$ to $K[X] \times K[X]$.
With the above notations: 

\begin{thm}\label{thm:secondmain}
Let $(X,\nabla)$ be a partial magma and let
$\alpha \in \Pfun(X,X)$. 
\begin{itemize}[widest=(a),topsep=1mm,itemsep=1mm]

\item[{\rm (a)}] $(K[X],\mu_\nabla,\tau_\alpha)$ is a Hom-algebra;

\item[{\rm (b)}] $(K[X],\mu_\nabla,\tau_\alpha)$ is (partially 
$X$-)multiplicative if and only if 
$\alpha$ is a (weak) partial endomorphism of partial magmas;

\item[{\rm (c)}] $(K[X],\mu_\nabla,\tau_\alpha)$ is (partially
$X$-)Hom-associative if and only if 
$(X,\nabla,\alpha)$ is (partially) Hom-associative;

\item[{\rm (d)}] $(K[X],\mu_\nabla)$ is partially
$X$-associative if and only if 
$(X,\nabla)$ is partially associative;

\item[{\rm (e)}] $(K[X],\mu_\nabla)$ is associative 
if and only if $(X,\nabla)$ is associative.

\end{itemize}
\end{thm}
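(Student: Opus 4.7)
My plan is to establish the five parts in order, with (b) and (c) carrying the main content and (d), (e) arising as specializations to $\alpha = \I_X$.

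For (a), I would simply observe that by construction $\tau_\alpha$ is $K$-linear, $\mu_\nabla$ is $K$-bilinear, and $K[X]$ is a $K$-vector space, which is all that is required for $(K[X],\mu_\nabla,\tau_\alpha)$ to be a Hom-algebra.

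For (b), the key reduction is that both $\tau_\alpha \circ \mu_\nabla$ and $\mu_\nabla \circ (\tau_\alpha \times \tau_\alpha)$ are $K$-bilinear maps $K[X] \times K[X] \to K[X]$, so they agree on all of $K[X] \times K[X]$ if and only if they agree on every pair of basis elements $x,y \in X$. Unpacking the definitions, $\tau_\alpha(\mu_\nabla(x,y))$ equals $\alpha(\nabla(x,y))$, viewed in $X \subset K[X]$, when both $\nabla(x,y)$ and $\alpha(\nabla(x,y))$ are defined, and equals $0$ otherwise; similarly $\mu_\nabla(\tau_\alpha(x),\tau_\alpha(y))$ equals $\nabla(\alpha(x),\alpha(y))$ when $\alpha(x)$, $\alpha(y)$ and $\nabla(\alpha(x),\alpha(y))$ are all defined, and equals $0$ otherwise. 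Thus unconditional equality on basis elements translates exactly to $\alpha \circ \nabla = \nabla \circ (\alpha \times \alpha)$ as partial functions (the two sides are defined on the same pairs and agree there), which is the defining condition of a partial endomorphism. Equality only where both sides are nonzero, i.e.\ partial $X$-multiplicativity, translates exactly to $\alpha \circ \nabla \approx \nabla \circ (\alpha \times \alpha)$, the defining condition of a weak partial endomorphism.

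Part (c) proceeds in the same manner after reducing to triples $(x,y,z) \in X^3$ by $K$-trilinearity. A basis-level expansion shows that $\mu_\nabla(\tau_\alpha(x),\mu_\nabla(y,z))$ equals $\nabla(\alpha(x),\nabla(y,z))$ when $\nabla(y,z)$, $\alpha(x)$ and the outer application of $\nabla$ are all defined, and vanishes otherwise, with a symmetric statement for $\mu_\nabla(\mu_\nabla(x,y),\tau_\alpha(z))$. Hence unconditional equality corresponds to $\nabla \circ (\alpha \times \nabla) = \nabla \circ (\nabla \times \alpha)$ as partial functions (Hom-associativity of $(X,\nabla,\alpha)$), while partial $X$-Hom-associativity corresponds to the $\approx$ version (partial Hom-associativity of $(X,\nabla,\alpha)$). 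Parts (d) and (e) are then obtained by setting $\alpha = \I_X$ in (c) and noting that $\tau_{\I_X} = \I_{K[X]}$.

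Rather than any computational difficulty, the main obstacle is the careful bookkeeping needed to match the magma-side dichotomy \emph{defined}/\emph{undefined} with the algebra-side dichotomy \emph{nonzero}/\emph{zero}, and correspondingly to match the partial-function relations $=$ and $\approx$ with, respectively, unconditional equality in $K[X]$ and equality on the nonzero locus.
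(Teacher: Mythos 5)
Your proposal is correct and follows essentially the same route as the paper's proof: reduce to basis elements by $K$-(bi/tri)linearity, identify the zero/nonzero dichotomy in $K[X]$ with the undefined/defined dichotomy for the partial operations, and thereby match unconditional equality with equality of partial functions and the ``nonzero locus'' condition with the relation $\approx$. Your write-up is in fact more explicit about this bookkeeping than the paper's rather terse argument, but there is no substantive difference in method.
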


\begin{proof}
(a) This is clear. The statement in (b) follows 
from $K$-linearity and the fact that the equalities
\[ 
\begin{array}{rcl}
(\tau_\alpha \circ \mu_\nabla)(x,y) &=& 
(\alpha \circ \nabla)(x,y) \\[1mm]
(\mu_\nabla \circ (\tau_\alpha \times \tau_\alpha))(x,y) 
&=& (\nabla \circ (\alpha \times \alpha))(x,y) 
\end{array}
\]
hold for all $x,y \in X$ for which the left hand sides above 
are nonzero. The statement in (c) follows from $K$-bilinearity and the 
fact that the equalities
\[ 
\begin{array}{rcl}
( \mu_\nabla \circ (\tau_\alpha \times \mu_\nabla) )(x,y,z)
&=& (\nabla \circ (\alpha \times \nabla))(x,y,z) \\[1mm]
(\mu_\nabla \circ (\mu_\nabla \times \tau_\alpha))(x,y,z) 
&=& (\nabla \circ (\nabla \times \alpha))(x,y,z)
\end{array}
\]
hold for all $x,y,z \in X$ for which the left hand sides above
are nonzero. The statements in (d) and (e) follow from (c).
\end{proof}

\begin{exa}\label{ex:someinstances}
We now exemplify Theorems \ref{thm:main} and \ref{thm:secondmain} 
for some instances of Hom-algebras $H := (K[X],\mu_\nabla,\tau_\alpha)$ 
when $(X,\nabla)$ is a magma of order two. 
\begin{itemize}[widest=(a),itemsep=1mm,topsep=1mm]
\item[(a)] Let $\nabla = 2232$. This is magma (35) in Proposition 
\ref{prop:isomorphismclasses}. Then:
\begin{itemize}[topsep=0mm,itemsep=0.5mm]

\item $H$ is partially $X$-multiplicative $\Leftrightarrow$
$\alpha \in \{ 33,13,23,32,12,22 \}$; 
     
\item $H$ is multiplicative $\Leftrightarrow$
$\alpha \in \{ 33,12 \}$;

\item $H$ is partially $X$-Hom-associative $\Leftrightarrow$
$\alpha \in {\rm Pfun}(X,X)$;

\item $H$ is Hom-associative $\Leftrightarrow$
$\alpha = 33$;

\item $H$ is multiplicative and Hom-associative $\Leftrightarrow$
$\alpha = 33$;

\item $H$ is partially $X$-associative but not associative.

\end{itemize}
\item[(b)] Let $\nabla = 2111$. This is magma (40) in Proposition 
\ref{prop:isomorphismclasses}. Then:
\begin{itemize}[topsep=0mm,itemsep=0.5mm]

\item $H$ is partially $X$-multiplicative $\Leftrightarrow$
$\alpha \in \{ 33,13,23,31,32,12 \}$; 
     
\item $H$ is multiplicative $\Leftrightarrow$
$\alpha \in \{ 33,12 \}$;

\item $H$ is partially $X$-Hom-associative $\Leftrightarrow$
$\alpha \in \{ 33,13,23,31,32,22 \}$;

\item $H$ is Hom-associative $\Leftrightarrow$
$\alpha \in \{ 33,22 \}$;

\item $H$ is multiplicative and Hom-associative $\Leftrightarrow$
$\alpha = 33$;

\item $H$ is not partially $X$-associative and hence not 
associative.

\end{itemize}
\item[(c)] Let $\nabla = 1221$. This is magma (43) in Proposition 
\ref{prop:isomorphismclasses}. Note that $(X,\nabla)$ is a group 
so that $H$ is the group ring $K[X]$. Then:
\begin{itemize}[topsep=0mm,itemsep=0.5mm]

\item $H$ is partially $X$-multiplicative $\Leftrightarrow$
$\alpha \in \{ 3  3,
     1     3,
     3     1,
     3     2,
     1     1,
     1     2 \}$; 
     
\item $H$ is multiplicative $\Leftrightarrow$
$\alpha \in  \{ 33,11,12 \}$;

\item $H$ is partially $X$-Hom-associative $\Leftrightarrow$
$\alpha \in  \{ 3     3,
     1     3,
     2     3,
     3     1,
     3     2,
     1     2,
     2     1 \}$;

\item $H$ is Hom-associative $\Leftrightarrow$
$\alpha \in \{ 33, 12 , 21 \}$;

\item $H$ is multiplicative and Hom-associative $\Leftrightarrow$
$\alpha \in \{ 33 , 12 \}$; 

\item $H$ is associative and hence partially $X$-associative.

\end{itemize}
\item[(d)] Let $\nabla = 2121$. This is magma (44) in Proposition 
\ref{prop:isomorphismclasses}. Then:
\begin{itemize}[topsep=0mm,itemsep=0.5mm]

\item $H$ is partially $X$-multiplicative $\Leftrightarrow$
$\alpha \in \{ 3  3,
     1     3,
     2     3,
     3     1,
     3     2,
     1     2,
     2     1 \}$; 
     
\item $H$ is multiplicative $\Leftrightarrow$
$\alpha \in \{ 33,12,21 \}$;

\item $H$ is partially $X$-Hom-associative $\Leftrightarrow$
$\alpha \in \{ 33,23,31,21 \}$;

\item $H$ is Hom-associative $\Leftrightarrow$
$\alpha \in \{ 33,21 \}$;

\item $H$ is multiplicative and Hom-associative $\Leftrightarrow$
$\alpha \in \{ 33,21 \}$;

\item $H$ is not partially $X$-associative and hence not 
associative.

\end{itemize}
\item[(e)] Let $\nabla = 2211$. This is magma (45) in Proposition 
\ref{prop:isomorphismclasses}. Then:
\begin{itemize}[topsep=0mm,itemsep=0.5mm]

\item $H$ is partially $X$-multiplicative $\Leftrightarrow$
$\alpha \in \{ 33,13,23,31,32,12 \}$; 
     
\item $H$ is multiplicative $\Leftrightarrow$
$\alpha \in \{ 33,12 \}$;

\item $H$ is partially $X$-Hom-associative $\Leftrightarrow$
$\alpha \in \{ 33,13,23,31,32,22 \}$;

\item $H$ is Hom-associative $\Leftrightarrow$
$\alpha \in \{ 33,22 \}$;

\item $H$ is multiplicative and Hom-associative $\Leftrightarrow$
$\alpha = 33$;

\item $H$ is not partially $X$-associative and hence not 
associative.

\end{itemize}
\end{itemize}
\end{exa}



\end{document}